\documentclass[twoside]{article}
\usepackage{epsfig}
\usepackage{amsfonts}
\usepackage{amssymb}
\usepackage{amsmath}
\usepackage{color}
\usepackage{lineno}  

\oddsidemargin  -4mm \evensidemargin -4mm \textwidth  166mm
\textheight  236mm \topmargin -4mm
\parskip 5mm

\newtheorem{theorem}{Theorem}

\newtheorem{assumption}{Assumption}{\bf}{}
{\bf}{}
{\bf}{}
{\bf}{}
\newtheorem{definition}{Definition}{\bf}{}

\def\square{\hbox{\vrule\vbox{\hrule\phantom{o}\hrule}\vrule}}
\def\qed{\rightline{$\square$}}
\newtheorem{remark}{Remark}


\title{Mixed integer predictive control and shortest path reformulation}
\author{Dario Bauso\footnote{Dipartimento di
Ingegneria Informatica, Universit\`a di Palermo, V.le delle
Scienze, 90128 Palermo, ITALY - dario.bauso@unipa.it} }

\begin{document}
\maketitle

\begin{abstract}
Mixed integer predictive control deals with optimizing integer and real control 
variables over a receding horizon. The mixed integer nature of controls might be
a cause of intractability for instances of larger dimensions. To tackle this little issue, 
we propose a decomposition method which turns the original $n$-dimensional problem into $n$ indipendent 
scalar problems of lot sizing form. Each scalar problem is then reformulated as a shortest path one and solved 
through linear programming over a receding horizon. This last reformulation step mirrors 
a standard procedure in mixed integer programming.
The approximation introduced by the decomposition can be lowered if we operate in accordance with the predictive control technique: i) optimize controls over the horizon ii) apply the first control iii) provide measurement updates of other states and repeat the procedure.
\end{abstract}

\linenumbers 

\section{Introduction}
Mixed integer predictive control arises when optimizing integer and real control variables in 
a receding horizon context~\cite{AVH2010}. For this reason, many authors see it as a specific field in the broader area 
of optimal hybrid control~\cite{BBM98}. Optimal integer control problems have been receiving a growing attention 
and are often categorized under different names. See, for instance, the literature on finite alphabet control~\cite{GQ03,TMD06}. Integer control requires a bit more than standard convex optimization techniques. 
From the literature we know that new properties come into play. As an example, look at \emph{multimodularity} presented as the counterpart of convexity in discrete action spaces~\cite{DS00}. When talking about mixed integer variables, it is, of course,  not possible not to mention the more than vast literature on mixed integer programming~\cite{NW88}. It is exactly
in this context that we have found inspiration as clarified in more details next.
 
In this paper, we have moved our steps along the line of~\cite{PW93} which surveys solution methods for mixed integer lot sizing models. Indeed, decomposing an $n$-dimensional dynamic system into $n$ indipendent lot sizing systems is almost all about this paper is centered around. The approximation introduced by the decomposition can be reduced if we operate in accordance with the predictive control technique: i) optimize controls for each indipendent system all over a prediction horizon, 
ii) apply the first control to each indipendent system, iii) provide measurement updates of other states and repeat the procedure.  The main contribution of this work is to reformulate the mixed integer problem of point i) as a shortest path problem and solve this last through linear programming. This approach mirrors the method surveyed in \cite{PW93} with the differences that here the shortest path problems run iteratively forward in time over a receding horizon. Reframing the method in a receding horizon context is an element of novelty and presents some additional and new issues which are discussed and overcome throughout the paper.

This paper differs from~\cite{AVH2010} as we focus on a smaller class of problems that can be solved exactly and do not require advanced relaxation methods which, in turn, are a main topic in~\cite{AVH2010}. To bring our discussion back to hybrid control, the lot sizing like model used here has much to do with
the inventory example briefly mentioned in~\cite{BBM98}. There, the authors simply include the example in the large list of hybrid optimal control problems but do not address the issue of how to fit general methods to this specific problem. 
On the contrary, this work cannot emphasize enough the computational benefits deriving from the ``nice structure'' of the lot sizing constraints matrix.  Binary variables, used to model impulses, match linear programming in a previous work of the same author~\cite{B09}. There, the linear reformulation is a straightforward derivation of the~\emph{(inverse) dwell time} conditions appeared first in~\cite{HLT05}. Analogies with~\cite{B09} are, for instance, the use of total unimodularity to prove the exactness of the linear programming reformulation. Differences are in the procedure itself upon which the linear program is built up. The shortest path model is an additional element which distinguishes the present approach from~\cite{B09}. 

This paper is organized as follows. We state the problem in Section~\ref{sec:problem statement}. We then move to present the decomposition method in Section~\ref{sec:robust decomposition}. In Section~\ref{sec:shortest path}, we turn to introducing the shortest path reformulation and the linear program. We dedicate the last Section~\ref{sec:numerical example} to support our theoretical analysis with some numerical results.

\section{Mixed integer predictive control}\label{sec:problem statement}
In mixed integer control we usually have continuous state $x(k) \in \mathbb R^n$, 
continuous controls $u(k) \in \mathbb R^n$ and disturbances $w(k)\in \mathbb R^n$, discrete controls $y(k) \in \{0,1\}^n$ (see e.g., \cite{AVH2010}).
Evolution of the state over a finite horizon of length $N$ is described by a linear discrete time dynamics in the general form (\ref{dynamics}), where
$A$ and $E$ are matrices of compatible dimensions: 
\begin{align}
  \label{dynamics} x(k+1)=Ax(k) + E w(k) + u(k) \geq 0, \quad x(0)=x(N)=0. 
  \end{align}
The above dynamics is characterized by one discrete and continuous control variable per each state, and this reflects the idea that we may wish to control indipendently each state component. Also, starting from initial state at zero, we wish to drive the final state to zero which is a typical requirement when controlling a system over a finite horizon. On this purpose, we have added equality constraints on the final states. Also, we force the states to remain confined within a desired region, take for it the positive orthant, which may describe a safety region in engineering applications or the desire of preventing shortcomings in inventory applications. 

Continuous and discrete controls are linked together by general \emph{capacity constraints} (\ref{capconst}), where the parameter $C$ 
is an upper bound on control:       
\begin{align}
\label{capconst} 0 \leq u(k) \leq C y(k), 	\quad y(k)\in \{0,1\}^n.
  \end{align}
For clarity reasons, $y(k)$ is the decision of controlling or not the system, and $u(k)$ is the control action. 
So if we decide not to control the system then the control action is null, otherwise this last is any value between zero and its upper bound $C$.

The following assumption helps us to describe the common situation where the disturbance seeks to push the state out of 
the desired region. 
\begin{assumption}[Unstabilizing disturbance effects]
   \begin{equation} \label{ue}E w(k) < 0.\end{equation}
\end{assumption}

At this point, the non negative nature of controls $u(k)$ should become much clearer. Actually, control actions are used to push the state far from boundaries into the positive orthant thus to counterbalance the unstabilizing effects of disturbances over a certain period to come. 
However, controlling the system has a cost and ``over acting'' on it is punished by introducing a cost/objective function as explained next. 

The objective function to minimize with respect to $y(k)$ and $u(k)$ is a linear one including proportional, holding and fixed cost terms
expressed by parameters $p^k$, $h^k$, and $f^k$ respectively: 
\begin{align}
\label{obj} \sum_{k=0}^{N-1} \left( p^k u(k) + h^k x(k) + f^k y(k)\right).
  \end{align}

Conditions (\ref{dynamics})-(\ref{obj}) introduced so far describe coincisely the problem of interest. 
In the next section, we recall a standard method to convert the problem of interest (\ref{dynamics})-(\ref{obj})  into a mixed integer
linear program returning the exact solution in terms of optimal control actions $u(k)$ and $y(k)$. 

\begin{remark} For sake of simplicity disturbances $w(k)$ are deterministic and apriori known. 
The approach presented below is still valid if we drop this assumption and turn to consider unknown disturbances. 
Only, we should carefully repropose problem (\ref{dynamics})-(\ref{obj}) in a receding horizon form with iterative measuments updates and control optimization forward in time all over the horizon.
\end{remark}

\subsection{Mixed integer linear program and exact solution.}
The mixed integer nature of the above program makes it intractable for increasing number of variables and horizon length. So, the topic 
presented below is motivated mainly by comparisons reasons and applies only to problems of relatively small dimensions.

Before introducing the mixed integer linear program we need to define the following notation.  Let us start by collecting states, continuous and discrete controls, proportional, holding and fixed costs all in opportune vectors as shown below:
$$\begin{array}{lll}x=[x(0)^T\ldots x(N)^T]^T, & u=[u(0)^T\ldots u(N-1)^T]^T, & y=[y(0)^T\ldots y(N-1)^T]^T, \\
\\p=[(p^0)^T\ldots (p^{N-1})^T]^T, & h=[(h^0)^T\ldots (h^{N-1})^T]^T, & f=[(f^0)^T\ldots (f^{N-1})^T]^T.\end{array}
$$
Furthermore, to put dynamics (\ref{dynamics}) into ``constraints'' form, let us introduce 
matrices 
$\mathbf A$, $\mathbf B$ and vector $\mathbf b$ defined as $$\mathbf A = \left[\begin{array}{cccccc} -I  & 0 & 0 & \hdots & 0 & 0 \\
A & -I & 0 & \ldots & 0 & 0 \\
0 & A & -I &  \ldots & 0 & 0 \\
0 & 0 & A & \hdots & 0 & 0 \\
\vdots & \vdots & \vdots & \ddots & \vdots & \vdots \\
0 & 0 & 0 & \ldots & A & -I \\
0  & 0 & 0 & \hdots & 0 & -I \\\end{array}\right]; \; \mathbf B = \left[\begin{array}{cccccc} 0  & 0 & \hdots & 0  \\
B & 0 & \ldots & 0 \\
0 & B & \ldots & 0  \\
\vdots & \vdots & \ddots & \vdots \\
0 & 0 & \ldots & B \\
0  & 0 & \hdots & 0  \\\end{array}\right];\; \mathbf b=\left[- \xi_0^T \, \left(E w(0)\right)^T \, \ldots \, \left(E w(N)\right)^T \,-\xi_f^T\right]^T.$$ 
Notice that once we take for $\xi_0$ and $\xi_f$ the value zero, the first and last rows in the aforementioned matrices  restate the constraints on initial and final state of (\ref{dynamics}).  

Finally, we are in the condition to establish that problem (\ref{dynamics})-(\ref{obj}) can be solved exactly through the following mixed integer
linear program:
\begin{align}
(MIPC) \quad &  \min_{u,y}  \quad  J(u,y)=p u + h x + f y \label{MIPC1}\\
  &   \mathbf A x + \mathbf B u = \mathbf b\label{eqc}\\  &  0 \leq u \leq C y, 	\quad y \in \{0,1\}^{nN}.\label{MIPC3}
  \end{align}

The mixed integer linear program (\ref{MIPC1})-(\ref{MIPC3}) is the most natural mathematical programming representation of the problem of interest 
(\ref{dynamics})-(\ref{obj}). For this reason, throughout this paper we will almost always refer to (\ref{MIPC1})-(\ref{MIPC3}) when we wish to bring back the discussion to the source problem (\ref{dynamics})-(\ref{obj}) and its exact solution. 

To overcome the intractability of the mixed integer linear program (\ref{MIPC1})-(\ref{MIPC3}), we propose a new method whose underlying idea is to bring back dynamics (\ref{dynamics}) to the lot sizing model \cite{PW93}. To do this, we introduce some additional assumptions on the structure of matrix $A$ which simplify the 
tractability and affect in no way the generality of the results. This argument is dealt with in details in the next section.

\subsection{Introducing some structure on $A$}
Our main goal in this section is to rewrite (\ref{dynamics}) in a ``nice'' form. With ``nice form'' we mean a form that 
emphasizes the analogies with standard lot sizing models \cite{PW93}. ``Stop beating around the bush'', we will henceforth refer to the 
following dynamics in state of (\ref{dynamics}):
\begin{equation}\label{dynamics1}x(k+1)=x(k) + \Delta x(k) + E w(k) + u(k) \geq 0.\end{equation}
The reasons why expression (\ref{dynamics1}) is a nice one is that it isolates the dependence of one component state on the other ones. 
To tell it differently we have separated the influence of all other states on state $i$. It will be soon clearer that 
turning our attention to the new expression (\ref{dynamics1}) is a prelude in view of the decomposition  approach
discussed later on.

Once clarified the reasons, we need next to clarify how to go from (\ref{dynamics}) to (\ref{dynamics1}) and what is the underlying assumption that allows us to do that.
Before doing this let us denote with $I \in \mathbb R^{n \times n}$ the identity matrix and $a_{ij}$ the dependence of state $i$ on state $j$.
So, we can make the following assumption.
\begin{assumption}\label{asm:1} Matrix $A$ can be decomposed as $$A=I+\Delta, \quad \quad \Delta=\left[\begin{array}{cccccc} 0 & a_{12} & \hdots &a_{1,n-1} & a_{1n} \\ a_{21} &  0 &  \hdots &a_{2,n-1} & a_{2n} \\
\vdots &  \vdots &  \ddots & \vdots & \vdots \\
a_{n1} &  a_{n2} &  \hdots & a_{n,n-1} & 0\end{array}\right].$$
\end{assumption} 
The reader may notice that (\ref{dynamics1}) is a straighforward derivation of (\ref{dynamics}) once we take for good Assumption \ref{asm:1}.

Our secondary goal in this section is to preserve the nature of the game which has stabilizing control actions playing against unstabilizing disturbances. To do this, in our next assumption we do consider the case where the influence of other states on state $i$ is relatively ``weak'' in comparison to the unstabilizing effects of disturbances.    
\begin{assumption}[Weakly coupling]\label{asm:wc}
   \begin{equation}\label{wc}\Delta x(k) + E w(k) < 0.\end{equation}
\end{assumption}
Notice that the above assumption preserves the nature of the game by bounding the effects of mutual dependence of state components 
represented by the term $\Delta x(k)$. A closer look at (\ref{ue}) and (\ref{wc}) sounds like the term $\Delta x(k)$ do not counterbalance the effects of $E w(k)$.
States mutual dependence only  emphasize or reduce ``weakly'' the unstabilizing  effects of disturbances.       

We end this section by noticing that (\ref{dynamics1}) is not yet in ``lot sizing'' form \cite{PW93}. 
In the next section, we present a decomposition approach that translate dynamics (\ref{dynamics1}) into $n$ scalar dynamics in ``lot sizing'' form \cite{PW93}.

\section{Robust decomposition}\label{sec:robust decomposition}
With the term ``decomposition'' we mean a mathematical manipulation through which the original dynamics (\ref{dynamics1}) is replaced by 
$n$ independent dynamics of the form: 
\begin{equation}\label{dynamics2} x_i(k+1)= x_i(k) - d_i(k) + u_i(k).\end{equation}
The above dynamics is in a typical lot sizing form in the sense that the (inventory) state tomorrow 
$x_i(k+1)$ is equal to the (inventory) state today $x_i(k)$ plus the discrepancy between today demand $d_i(k)$ and today reordered quantity $u_i(k)$. 
Changing (\ref{dynamics1}) with (\ref{dynamics2}) is possible once we relate the demand  $d_i(k)$ to the current values of all other state components and disturbances as expressed below:
\begin{equation}\label{d}\begin{array}{lll}  d_i(k) & = &  -\left[ \sum_{j=1, \, j \not = i}^n A_{ij}  x_j(k) + \sum_{j=1}^n  E_{ij} w_j(k)  \right]\\ & = &
- \left[\Delta_{i \bullet} x(k) + E_{i \bullet} w(k) \right]. \end{array}\end{equation}
To tell it differently, we do assume that the influence that all other states have on state $i$ enters into equation (\ref{dynamics2}) through 
demand $d_i(k)$ defined in (\ref{d}). Our next step is to make the $n$ dynamics in the form (\ref{dynamics2}) mutually independent. This is possible
by replacing the current state values $x_j(k)$, $j\not = i$ with their estimated values on the part of agent $i$ which we denote by $\tilde x_j(k)$, $j\not = i$. Still with reference to (\ref{dynamics2}), this implies to replace the current demand  $d_i(k)$ by the ``estimated'' demand  $\tilde d_i(k)$ defined as in (\ref{ed}) where $X^k$ is the set of admissible state vectors $x(k)$:
\begin{equation}\label{ed}
\tilde d_i(k)  = 
\max_{\xi \in X^k} \left\{ - \Delta_{i \bullet}  \xi - E_{i \bullet} w(k)  \right\}.\end{equation}
The idea behind (\ref{ed}) is to take for estimated value the worst admissible demand, i.e., the demand that would push the state out of the positive orthant in a fewest time and such a demand is of course the maximal one.
However, it must be noted that we cannot see any drawbacks in combining other decomposition methods  with the approach presented in the rest of the paper.     
To complete the decomposition, it is left to turn the objective function (\ref{obj}) into $n$ indipendent components 
$$J_i(u_i,y_i)=\sum_{k=0}^{N-1} \left( p_i^k u_i(k) + h_i^k x_i(k) + f_i^k y_i(k)\right).$$
Note that because of the linear structure of $J(u,y)$ in (\ref{MIPC1}), it turns $J(u,y)=\sum_{i=1}^{n} J_i(u_i,y_i)$. So, in the end we have translated our original problem into $n$ indipendent mixed integer linear minimization problems of the form (\ref{objd})-(\ref{ud}) as requested at the beginning of this section. In the spirit of predictive control, each minimization problem is then solved forwardly in time all over the horizon. So, for $\tau=0,\ldots,N-1$ we need to solve 
\begin{align}\label{objd}  \left(MIPC_i\right)  \quad &\min_{u_i,y_i} \quad  \sum_{k=\tau}^{N-1} \left( p_i^k u_i(k) + h_i^k x_i(k) + f_i^k y_i(k)\right)\\
\label{dynd}  &x_i(k+1)= x_i(k) -\tilde d_i(k) + u_i(k) \geq 0, \quad x_i(\tau)=\xi_i^0,  \, x_i(N)=0\\\label{ud} & 0 \leq u_i(k) \leq C y_i(k), 	\quad y_i(k)\in \{0,1\}.
  \end{align}

It is worth to be noted that non null initial states, which materialize in values of $\xi_i^0$ strictly greater than zero in constraints (\ref{dynd}) might induce infeasibility of $\left(MIPC_i\right)$. So, moving from $\left(MIPC\right)$ to $\left(MIPC_i\right)$ has this little drawback that we will discuss in more details later on in Section \ref{subsec:rh} together with some other issues concerned with the receding implementation of our method.


\section{Shortest path and linear programming}\label{sec:shortest path}
So far, we have first formulated the problem of interest and then decomposed it into $n$ indipendent scalar problems. 
By the way, decomposition is only the first step of our solution approach. Actually, the mixed integer nature of variables in (\ref{objd})-(\ref{ud}) is still an issue to be dealt with. This second part of the work focuses on the relaxation of the integer constraints $y_i(k)\in \{0,1\}$ which 
would facilitate the tractability of the problem. It is well known that relaxation introduces, in general, some approximation in the solution.
The main result of this work establishes that, for the problem at hand, relaxing and massaging the problem in a certain manner, will lead to 
a shortest path reformulation of the original problem. This is a great result as, it is well known that shortest path problem are in turn easily tractable and solvable through linear programming. Shortest path formulations are based on the notion of regeneration interval discussed in details in the next section.

\subsection{Regeneration interval $[\alpha,\beta]$}
Let us start by introducing a formal definition of \emph{regeneration interval} which represents the central topic in this section. The definition, available in the literature for scalar lot sizing models, is borrowed from \cite{PW93} and adapted to each single (scalar) dynamics $i$ of our decomposed $n$-dimensional model. So, with reference to the generic minimization problem $i$ expressed by (\ref{objd})-(\ref{ud}), let us state what follows.

\begin{definition}[Pochet and Wolsey 1993] A pair of periods $[\alpha, \beta]$ form a \emph{regeneration interval} for $(x_i,u_i,y_i)$
if $x_i(\alpha -1) = x_i(\beta)=0$ and $x_i(k)>0$ for $k=\alpha,\alpha+1, \ldots,\beta-1$.	\end{definition}

Given a regeneration interval $[\alpha,\beta]$, we can define the accumulated demand over the interval $d_i^{\alpha \beta}$,
and the residual demand $r_i^{\alpha \beta}$ as 
\begin{equation}\label{eq:ad}d_i^{\alpha \beta}= \sum_{k=\alpha}^{\beta} \tilde d_i(k),\quad 
r_i^{\alpha \beta}= d_i^{\alpha \beta} - \left\lfloor \frac{d_i^{\alpha \beta}}{C}\right\rfloor C.\end{equation}

Our idea is now to translate problem (\ref{objd})-(\ref{ud}) into new variables. More formally, let us consider variables $y_i^{\alpha \beta} (k)$ and $\epsilon_i^{\alpha \beta} (k)$ defined in (\ref{ye}) with the following meaning. Variable $y_i^{\alpha \beta} (k)$ is equal to one in presence of a saturated control on time $k$ and zero otherwise. Similarly, variable $\epsilon_i^{\alpha \beta} (k)$ is equal   to one in presence of a non saturated control on time $k$ and zero otherwise: \begin{equation}\label{ye}y_i^{\alpha \beta} (k)=\left\{\begin{array}{ll}1 & \mbox{if $u_i(k)=C$}\\ 0 & \mbox{otherwise.}\end{array}\right. \quad \epsilon_i^{\alpha \beta} (k)=\left\{\begin{array}{ll}1 & \mbox{if $0 < u_i(k)< C$}\\ 0 & \mbox{otherwise.}\end{array}\right.\end{equation}
To translate the meaning of $y_i^{\alpha \beta} (k)$ and $\epsilon_i^{\alpha \beta} (k)$ in a lot sizing context, such variables tell us 
on which period full or partial batches are ordered.

At this point and with in mind the above variable transformation, we can rely on well known results in the lot sizing literature which convert the original mixed integer problem (\ref{objd})-(\ref{ud}) into a number of linear programs $\left(LP_i^{\alpha \beta} \right)$, each one associated to a specific regeneration interval.  Regeneration intervals and the associated linear programs are mutually related in a way that gives raise to a shortest path problem, which will be the central topic in the next section. For now, we simply repropose below the linear programming problem associated to a single regeneration interval $[\alpha,\beta]$. 
Denoting by $e_i^k=p_i^k + \sum_{j=k+1}^{N-1} h_i^j$ and after some standard manipulation, the linear program for fixed regeneration interval $[\alpha,\beta]$ appears as:
\begin{align}
\left(LP_i^{\alpha \beta} \right)  \quad & \min_{y_i^{\alpha,\beta},u_i^{\alpha,\beta} } \quad & \sum_{k=\alpha}^{\beta} \left(  Ce_i^k + f_i^k \right) y_i^{\alpha \beta} (k) + \sum_{k=\alpha}^{\beta} \left(  r^{\alpha \beta} e_i^k + f_i^k \right) \epsilon_i^{\alpha \beta} (k) \label{objlp}\\
  &  &  \sum_{k=\alpha}^{\beta} y_i^{\alpha \beta} (k) + \sum_{k=\alpha}^{\beta} \epsilon_i^{\alpha \beta} (k) = \left\lceil \frac{d_i^{\alpha\beta}}{C} \right\rceil  \label{clp1}\\  
  && \sum_{k=\alpha}^{t} y_i^{\alpha \beta} (k) + \sum_{k=\alpha}^{t} \epsilon_i^{\alpha \beta} (k) \geq \left\lceil \frac{d_i^{\alpha t}}{C} \right\rceil, &  \quad t=\alpha,\ldots,\beta-1 \label{clp2}\\ &&  \sum_{k=\alpha}^{\beta} y_i^{\alpha \beta} (k)  = \left\lceil \frac{d_i^{\alpha\beta}- r_i^{\alpha\beta}}{C} \right\rceil
  \label{clp3}\\ 
    && \sum_{k=\alpha}^{t} y_i^{\alpha \beta} (k) \geq \left\lceil \frac{d_i^{\alpha t} - r_i^{\alpha t}}{C} \right\rceil, &  \quad t=\alpha,\ldots,\beta-1 \label{clp4}\\ 
  &&y_i^{\alpha \beta} (k), \, \epsilon_i^{\alpha \beta} (k) \geq 0, & \quad k=\alpha,\ldots,\beta.\label{clp5}
  \end{align}
The above model is extensively used in the lot sizing context. We can limit ourselves to a pair of comments on the underlying idea of 
the constraints. So, let us start by focusing on the equality constraints (\ref{clp1}) and (\ref{clp3}). These constraints tell us that 
the ordered quantity over the interval has to be equal to the accumulated demand over the same interval. This makes sense as 
initial and final state of a regeneration interval are null by definition. Let us turn our attention to the inequality constraints 
(\ref{clp2}) and (\ref{clp4}). There, we impose that the accumulated demand in any subinterval may not exceed the ordered quantity over the 
same subinterval. Again, this is due to the condition that states are nonnegative at any period of a regeneration interval.  
Finally, the objective function (\ref{objlp}) is simply a rearrangement of (\ref{objd}) induced by the variable transformation seen above and specialized to the regeneration interval $[\alpha,\beta]$ rather than on the entire horizon $[0,N]$.

We are ready to recall the following ``nice property'' of $(LP_i^{\alpha\beta})$ presented first by Pochet and Wolsey in \cite{PW93}.
\begin{theorem}[Total unimodularity] The optimal solution of $(LP_i^{\alpha\beta})$ is feasible. \end{theorem}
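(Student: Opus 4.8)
The plan is to read the statement as an \emph{exactness} assertion. Although (\ref{clp5}) only relaxes the indicator variables of (\ref{ye}) to $y_i^{\alpha\beta}(k),\epsilon_i^{\alpha\beta}(k)\ge 0$, the claim is that the optimum of the relaxed program $(LP_i^{\alpha\beta})$ is automatically integral and therefore respects the $0/1$ meaning of (\ref{ye}), so that it is a bona fide feasible point of the mixed integer problem $(MIPC_i)$ in (\ref{objd})-(\ref{ud}). I would base the argument on two textbook facts invoked without proof: (i) if the coefficient matrix of a linear program is totally unimodular and the right-hand side is integral, then every vertex of the feasible polyhedron is integral; and (ii) a feasible and bounded linear program attains its optimum at a vertex. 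Fact (ii) applies because the equality (\ref{clp1}) together with $y_i^{\alpha\beta},\epsilon_i^{\alpha\beta}\ge 0$ confines the feasible set to a polytope, and the right-hand sides in (\ref{clp1})-(\ref{clp4}) are integral since each is a ceiling $\lceil\cdot\rceil$. The entire theorem therefore collapses onto one point: total unimodularity of the coefficient matrix of (\ref{clp1})-(\ref{clp5}).

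The first thing I would do is expose the structure of that matrix. If the columns are listed period by period, splitting the \emph{saturated} variables $y_i^{\alpha\beta}(\cdot)$ from the \emph{partial} ones $\epsilon_i^{\alpha\beta}(\cdot)$, then every row of (\ref{clp1})-(\ref{clp4}) is a prefix sum: rows (\ref{clp1}) and (\ref{clp2}) carry a run of ones over $k=\alpha,\ldots,t$ on both the $y$ and the $\epsilon$ block, while rows (\ref{clp3}) and (\ref{clp4}) carry the identical run of ones on the $y$ block alone. Each such block, taken in isolation, enjoys the consecutive-ones property and is thus an \emph{interval matrix}, the archetypal totally unimodular matrix; the nonnegativity rows (\ref{clp5}) add only rows of the identity. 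So the two ingredients are individually totally unimodular, and the real content is to show that \emph{assembling} them does not destroy the property, since total unimodularity is not preserved by arbitrary stacking of totally unimodular matrices. This assembly is the main obstacle.

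I would discharge it along the shortest-path line announced in Section~\ref{sec:shortest path}. Subtracting consecutive prefix-sum rows inside each block turns the cumulative counts into single-period increments, and the resulting coefficients are the signed node--arc incidences of the layered digraph whose nodes are the integer levels $0,1,\ldots,\lceil d_i^{\alpha\beta}/C\rceil$ attained by the prefix sums and whose arcs are the admissible orderings encoded by $y_i^{\alpha\beta}$ and $\epsilon_i^{\alpha\beta}$; node--arc incidence matrices of digraphs are totally unimodular by the classical theorem, which simultaneously explains why the reformulation is a genuine shortest-path problem. As an alternative, self-contained verification I would appeal to the Ghouila--Houri criterion: the prefix intervals of (\ref{clp1})-(\ref{clp4}) are nested, so any chosen subset of rows can be two-coloured by alternating the nested intervals, after which every column receives a signed sum in $\{-1,0,1\}$, the identity rows of (\ref{clp5}) being absorbed trivially. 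Either route yields total unimodularity, whereupon facts (i) and (ii) deliver an integral optimizer of $(LP_i^{\alpha\beta})$. Through the correspondence (\ref{ye}) this integral point is an ordering pattern $(u_i,y_i)$ satisfying (\ref{dynd})-(\ref{ud}), and being integral it honours the $0/1$ requirement, which is exactly the asserted feasibility.
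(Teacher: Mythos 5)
Your overall skeleton is exactly the paper's: read the statement as saying that the LP relaxation has an integral (hence $0$--$1$, hence $(MIPC_i)$-feasible) optimal vertex, reduce everything to total unimodularity of the constraint matrix of (\ref{clp1})--(\ref{clp4}), and invoke the standard facts about TU matrices with integral right-hand sides (the ceilings). Where you diverge is in how TU itself is certified. The paper's (very terse) device is to \emph{reorder the rows} so that every \emph{column} has consecutive ones --- e.g.\ listing the constraints as $(\ref{clp4})_{t=\alpha},\ldots,(\ref{clp4})_{t=\beta-1},(\ref{clp3}),(\ref{clp1}),(\ref{clp2})_{t=\beta-1},\ldots,(\ref{clp2})_{t=\alpha}$ makes the column of $y_i^{\alpha\beta}(k)$ a single run of ones and the column of $\epsilon_i^{\alpha\beta}(k)$ a single run sitting inside the $(\ref{clp1})$--$(\ref{clp2})$ half; the interval-matrix theorem then applies directly and the ``assembly'' worry you raise dissolves. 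You instead work row-wise and propose two repairs: a row-differencing/network-matrix argument and a Ghouila--Houri two-colouring. Your instinct that stacking two TU blocks is not automatically TU is sound and is the one point the paper glosses over, but neither of your repairs is actually closed. The Ghouila--Houri sketch as written does not work: each row of (\ref{clp1})--(\ref{clp2}) carries the \emph{same} prefix in both the $y$-block and the $\epsilon$-block while the rows of (\ref{clp3})--(\ref{clp4}) carry it only in the $y$-block, so ``alternating the nested intervals'' by $y$-prefix length can produce an entry of $\pm 2$ in the $\epsilon$-block (take $(\ref{clp2})_s$, $(\ref{clp2})_t$, $(\ref{clp4})_t$ with $s<t$ and alternate naively); a valid colouring exists but must be chosen to respect both blocks at once. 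Likewise, row operations do not in general preserve or reflect total unimodularity, so the differencing step needs to be routed through a slack-variable/standard-form argument rather than asserted. In short: same theorem, same two textbook pillars, but the decisive combinatorial step is cleaner (and actually complete) via the paper's column-consecutive-ones reordering than via either of your sketches.
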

\begin{proof} The proof is based on the observation that the constraint matrix of $(LP_i^{\alpha\beta})$ is a $0-1$ matrix.
We can reorder the constraints in a certain manner, so that matrix has the consecutive 1's property on each column and turns to be 
totally unimodular. It follows that $y_i^{\alpha,\beta}$ and $\epsilon_i^{\alpha,\beta}$ are $0-1$ in any extreme solution.  
\qed \end{proof} 
The above theorem represents a first step in the process of converting the mixed integer problem $(MIPC_i)$ into a linear programming one.

\subsection{Shortest path}
In the previous section we have introduced a linear programming problem associated to a specific regeneration interval.
In this section, we resort to well known results on lot sizing to come up with a shortest path model which links together the linear programming problems of all possible regeneration intervals. Actually, it must be noted that the solution of (\ref{objd}) -(\ref{ud}) can be expressed as a unique regeneration interval $[0,N]$ or as a list of regeneration intervals.  

So, let us define variables $z_i^{\alpha \beta} \in \{0,1\}$ which tell us one or zero whenever a regeneration interval $[\alpha,\beta]$ appears or not in the solution of (\ref{objd}) -(\ref{ud}). The linear programming problem solving (\ref{objd}) -(\ref{ud}) takes on the form below. For $\tau=0,\ldots,N-1$, solve

\begin{align}
\label{objsp}\left(LP_i\right)  \quad & \min_{y_i^{\alpha\beta},u_i^{\alpha\beta}, z_i^{\alpha\beta}} \quad & \sum_{\alpha=\tau+1}^{N-1} \sum_{\beta=\alpha}^{N-1} \sum_{k=\alpha}^{\beta} \left[ \left(  Ce_i^k + f_i^k \right) y_i^{\alpha \beta} (k) + \sum_{k=\alpha}^{\beta} \left(  r^{\alpha \beta} e_i^k + f_i^k \right) \epsilon_i^{\alpha \beta} (k) \right]\end{align}
\begin{align}
&& \sum_{\beta=\tau+1}^N z_i^{\tau+1\beta}=1
\label{csp0}\\
&& \sum_{\alpha=\tau+1}^{t-1} z_i^{\alpha,t-1} - \sum_{\beta=t}^N z_i^{t\beta}=0 & \quad t=\tau+2,\ldots,N,  \quad \tau+1 \leq \alpha \leq \beta \leq N
\label{csp1}\\
  &  &  \sum_{k=\alpha}^{\beta} y_i^{\alpha \beta} (k) + \sum_{k=\alpha}^{\beta} \epsilon_i^{\alpha \beta} (k) = \left\lceil \frac{d_i^{\alpha\beta}}{C} \right\rceil z_i^{\alpha\beta},& \quad \tau+1 \leq \alpha \leq \beta \leq N \label{csp2}\\  
  && \sum_{k=\alpha}^{t} y_i^{\alpha \beta} (k) + \sum_{k=\alpha}^{t} \epsilon_i^{\alpha \beta} (k) \geq \left\lceil \frac{d_i^{\alpha t}}{C} \right\rceil z_i^{\alpha \beta}, &  \quad t=\alpha,\ldots,\beta-1, \quad \tau+1 \leq \alpha \leq \beta \leq N \label{csp3}\\ &&  \sum_{k=\alpha}^{\beta} y_i^{\alpha \beta} (k)  = \left\lceil \frac{d_i^{\alpha\beta}- r_i^{\alpha\beta}}{C} \right\rceil
z_i^{\alpha \beta} &  \quad \tau+1 \leq \alpha \leq \beta \leq N  \label{csp4}\\ 
    && \sum_{k=\alpha}^{t} y_i^{\alpha \beta} (k) \geq \left\lceil \frac{d_i^{\alpha t} - r_i^{\alpha t}}{C} \right\rceil z_i^{\alpha\beta}, &  \quad t=\alpha,\ldots,\beta-1,  \quad \tau+1 \leq \alpha \leq \beta \leq N \label{csp5}\\ 
  &&y_i^{\alpha \beta} (k), \, \epsilon_i^{\alpha \beta} (k),\, z_i^{\alpha \beta} \geq 0, & \quad k=\alpha,\ldots,\beta.\label{csp6}
  \end{align}

Let us spend a couple of words on the meaning of the above linear program. Constraints (\ref{csp2})-(\ref{csp6}) should be familiar to the reader as 
they already appeared in (\ref{clp1})-(\ref{clp5}). The only difference is that, now, because of the presence of $z_i^{\alpha \beta}$ in the right hand term, the constraints referring to a given regeneration interval come into play only  if that interval is chosen as part of the solution, that is, whenever $z_i^{\alpha \beta}$ is set equal to one. Furthermore, a new class of constraints appear in (\ref{csp0})-(\ref{csp1}). These constraints are typical of shortest path problems and in this specific case help us to force the variables  $z_i^{\alpha \beta} (k)$ to describe a path from $0$ to $N$. Finally, note that for $\tau=0$, the linear program $(LP_i)$ coincide with the linear program 
presented by Pochet and Wolsey in \cite{PW93}.

At this point, we are in a position to recall the crucial result established in \cite{PW93}. 
\begin{theorem}[Pochet and Wolsey, 1993] 
The linear program $(LP_i)$ solves $(MIPC_i)$.  \end{theorem}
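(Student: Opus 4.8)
The plan is to prove equality of optimal values in both directions and then to show that the linear relaxation $(LP_i)$ already attains an integral optimizer, so that its optimum coincides with that of the mixed integer program $(MIPC_i)$.

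First I would establish the \emph{decomposition into regeneration intervals}. From the boundary conditions $x_i(\tau)=\xi_i^0$ and $x_i(N)=0$ together with the nonnegativity $x_i(k)\geq 0$ in (\ref{dynd}), every feasible trajectory of $(MIPC_i)$ returns to the value zero at a finite set of epochs; consecutive zero-crossings partition the horizon into regeneration intervals $[\alpha,\beta]$ in the sense of the regeneration-interval definition. This partition is exactly what the flow-conservation constraints (\ref{csp0})--(\ref{csp1}) encode: their coefficient matrix is the node--arc incidence matrix of an acyclic network whose nodes are the periods $\tau+1,\ldots,N$ and whose arcs are the candidate intervals $[\alpha,\beta]$. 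Hence any $0$--$1$ assignment of the $z_i^{\alpha\beta}$ satisfying (\ref{csp0})--(\ref{csp1}) is a directed path from $\tau+1$ to $N$, i.e. a valid partition of the horizon, and conversely each partition yields such an assignment.

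Second, I would verify cost consistency. Using $e_i^k=p_i^k+\sum_{j=k+1}^{N-1}h_i^j$ and the change of variables (\ref{ye}), I would rewrite the separable objective (\ref{objd}) restricted to a single interval $[\alpha,\beta]$ as the inner cost (\ref{objlp}); summing over the arcs selected by $z$ then reproduces the full objective (\ref{objsp}). Combined with the first step, this shows that every feasible $(x_i,u_i,y_i)$ of $(MIPC_i)$ maps, through its regeneration partition and the saturation pattern in (\ref{ye}), to a feasible point of $(LP_i)$ of equal cost, and that every integral feasible point of $(LP_i)$ reconstructs a feasible control of $(MIPC_i)$ of the same cost.

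Third, I would prove integrality of the linear relaxation. For fixed integral $z$ supported on a partition, the constraints (\ref{csp2})--(\ref{csp5}) collapse, on each active arc, to the system (\ref{clp1})--(\ref{clp5}) of $(LP_i^{\alpha\beta})$, while on the inactive arcs the right-hand sides vanish and (\ref{csp6}) forces $y_i^{\alpha\beta}=\epsilon_i^{\alpha\beta}=0$. By the Total unimodularity theorem each active block has $0$--$1$ extreme points, and the network part (\ref{csp0})--(\ref{csp1}) is itself totally unimodular, so every extreme point of $(LP_i)$ is integral; since a linear objective is optimized at an extreme point, $(LP_i)$ returns an integral optimum of value equal to that of $(MIPC_i)$. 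The hard part will be precisely this step: justifying that the \emph{coupled} polyhedron of the full program $(LP_i)$---where the path variables $z_i^{\alpha\beta}$ multiply the right-hand sides of the per-interval constraints---is integral as a single system, rather than only after $z$ has been fixed. The clean resolution is to recognize $(LP_i)$ as a shortest-path problem whose arc length is the already-integral optimal value of $(LP_i^{\alpha\beta})$, so that integrality of the network flow and of each inner block compose; making this composition rigorous, and checking that it is unaffected by the receding index $\tau$, is the delicate point.
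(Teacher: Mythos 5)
Your proposal is correct and follows essentially the same route as the paper, whose own proof is only a two-sentence sketch identifying $(LP_i)$ as a shortest-path problem on the $z_i^{\alpha\beta}$ with arc costs given by the optimal values of the inner programs $(LP_i^{\alpha\beta})$ and deferring all details to Pochet and Wolsey. You actually supply more structure than the paper does (the partition into regeneration intervals, the cost bookkeeping via $e_i^k$, and the total-unimodularity composition), and you correctly flag the one genuinely delicate step---integrality of the coupled polyhedron where $z_i^{\alpha\beta}$ scales the right-hand sides---which the paper silently outsources to the cited reference.
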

\begin{proof} (Sketch) It turns out that the linear program $(LP_i)$ is a 
shortest path problem on variables $z_i^{\alpha,\beta}$. Arcs are all associated to a different regeneration interval $[\alpha,\beta]$
and the respective costs are the optimal values of the objective functions of the corresponding linear programs $(LP_i^{\alpha,\beta})$.    
We refer the reader to \cite{PW93} for further details. 

\qed  \end{proof}

\subsection{Receding horizon implementation of $(LP_i)$}\label{subsec:rh}
This section is dedicated to certain issues concerned with the implementation of $(LP_i)$ in a receding horizon context as typical of predictive control. 
As the reader may know, in predictive control we solve $(LP_i)$ iteratively and forward in time all over the horizon. 
In the formulation of $(LP_i)$, this is stated clearly when we specify that $\tau$ goes from $0$ to $N-1$ and for each value of 
$\tau$ we obtain a new linear program of type $(LP_i)$. After we solve $(LP_i)$ for $\tau=0$, we apply the first control to the system,
update initial states according to the last available measurements at time $\tau=1$ and move to solve a new $(LP_i)$ starting at $\tau=1$. 
We repeat this procedure until the end of the horizon, $\tau=N-1$. So, consecutive linear programs are linked together by 
initial state condition expressed in (\ref{dynd}), and which we rewrite below 
$$x_i(\tau)=\xi_i^0.$$
At this point, we would restate with emphasis the fact that dealing with non null initial states is a main difference between the linear program $(LP_i)$ and the linear program used in the lot sizing literature \cite{PW93}. 
To counter this little issue, we need to elaborate more on how to compute the accumulated demand in (\ref{eq:ad}). 
Actually, take for $[\tau,t]$ any interval with $x(\tau)=\xi_i^0 >s 0$. Then, condition (\ref{eq:ad}) needs to be revised as
\begin{equation} \label{rf}d_i^{\tau t}= \max\left\{\sum_{k=\tau}^{t} \tilde d_i(k)- \xi_i^0,0\right\}.\end{equation}
The rational behind the above formula has an immediate interpretation in the lot sizing context.
Actually, the effective demand over an interval is the accumulated demand reduced by the inventory stored and initially available at the warehouse.
From a computational standpoint, the revised formula (\ref{rf}) has a different effect depending on the cases where
the accumulated demand exceeds the initial state or not as discussed next.
\begin{enumerate}
\item $\sum_{k=\alpha}^{\beta} \tilde d_i(k) \geq \xi_i^0$:
the mixed linear program $(MPC_i)$ with initial state $x(\tau)=\xi_i^0 > 0$ and accumulated demand $\sum_{k=\alpha}^{\beta} \tilde d_i(k)$ is turned into an $(LP_i)$ characterized by null initial state $x(\alpha-1)=0$ and effective demand $d_i^{\alpha \beta}=\sum_{k=\alpha}^{\beta} \tilde d_i(k) - \xi_i^0$ as in the example below:
\begin{eqnarray*}(MPC_i) \quad \sum_{k=\alpha}^{\beta} \tilde d_i(k)=12, \quad x(\tau)=\xi_i^0=10 &  \Longrightarrow &  
(LP_i) \quad  x(\alpha-1)=0,  \quad d_i^{\alpha \beta}=2;\end{eqnarray*}

\item $\sum_{k=\alpha}^{\beta} \tilde d_i(k) < \xi_i^0$:
the mixed linear program $(MPC_i)$ with initial state $x(\tau)=\xi_i^0 > 0$ and accumulated demand $\sum_{k=\alpha}^{\beta} \tilde d_i(k)$ is unfeasible. The solution obtained at previous period $\tau-1$ applies. A second example is shown next: 
\begin{eqnarray*}(MPC_i) \quad \sum_{k=\alpha}^{\beta} \tilde d_i(k)=7, \quad x(\tau)=\xi_i^0=10 &  \Longrightarrow &  
(LP_i) \text{ unfeasible.} \end{eqnarray*}

In both cases, the revised formula (\ref{rf}) helps us to generalize the linear program $(LP_i)$ to cases where the initial state is non null and this is a crucial point when applying the lot sizing model in a receding horizon form.

\end{enumerate}
\section{Numerical example}\label{sec:numerical example}
In this specific example, dynamics (\ref{dynamics}) takes on the form expressed below. Such a dynamics is particularly significative 
as it reproduces the typical influence between position and velocity in a sampled second-order system. Initial and final states are null and state values must remain in the positive quadrant all over the horizon. More specifically, denoting by $x_1$ the position
and $x_2(k)$ an opposite in sign velocity, the dynamics appears as:
\begin{equation}\label{exdyn}\left[\begin{array}{ll} x_1(k+1)\\x_2(k+1)\end{array}\right]=\left[\begin{array}{lc} 1 & - \kappa\\\kappa & 1\end{array}\right] \left[\begin{array}{ll} x_1(k)\\x_2(k)\end{array}\right] - \left[\begin{array}{ll} w_1(k)\\w_2(k)\end{array}\right]+\left[\begin{array}{ll} u_1(k)\\u_2(k)\end{array}\right]\geq 0, \quad \left[\begin{array}{ll} x_1(0)\\x_2(0)\end{array}\right]=\left[\begin{array}{ll} x_1(N)\\x_2(N)\end{array}\right]=0. \end{equation}
A closer look at the first equation reveals that a greater velocity $x_2(k)$ reflects into a faster decrease of position $x_1(k+1)$. Similarly, the second equation tells us that a greater position $x_1(k)$ induces a faster increase of velocity $x_2(k+1)$ because of some elastic reaction.
In both equations, the non negative disturbances $w_i(k) \leq 0$ seek to push the states $x_i(k)$ out of the positive quadrant in accordance to Assumption \ref{ue}. Their effect is counterbalanced by positive control actions $u_i$.  Notice that matrix $A$ can be decomposed as described in 
Assumption \ref{asm:1}. Also, acting on parameter $\kappa$ we can easily guarantee the ``weakly coupling'' condition expressed in Assumption \ref{asm:wc}.

Turning to the capacity constraints (\ref{capconst}), for this two-dimensional example, these constraints can be rewritten as:
$$0 \leq \left[\begin{array}{ll} u_1(k)\\u_2(k)\end{array}\right] \leq C \left[\begin{array}{ll} y_1(k)\\y_2(k)\end{array}\right], \quad 
\left[\begin{array}{ll} y_1(k)\\y_2(k)\end{array}\right]\in \{0,1\}^2.$$
It is left to comment on the objective function (\ref{obj}). We consider the case where fixed costs are much more relevant than proportional and holding ones. This materializes in choosing a high value for $f^k$ in comparison to values of parameters $p^k$, $h^k$ as shown in the next linear
objective function: 
$$J(u,y)=   \sum_{k=0}^{N-1} \left( \mathbf 1^n u(k) + \mathbf 1^n x(k) + \mathbf {100}^n y(k)\right).$$
This choice makes sense for 
two reasons. First, all the work is centered around issues deriving from the integer nature of $y(k)$. So, high values of $f^k$ emphasize the
role of integer variables in the objective function. Second, high fixed costs incentivate solutions with the fewest number of control actions and
this facilitate the validation and interpretation of the simulated results. 

The next step is to decompose dynamics (\ref{exdyn}) in scalar lot sizing form (\ref{dynd}) which we rewrite below:
$$x_i(k+1)=x_i(k) - \tilde d_i(k) + u_i(k).$$
When it comes to the discussion on how to compute the estimated demand $\tilde d_i$, a natural choice is to set $\tilde d_i$ as below, where
we have denoted by $\tilde x_1(k)$ (respectively $\tilde x_2(k)$) the estimated value of state $x_1(k)$ (respectively $x_2(k)$) available
to agent $2$ (agent $1$):      
\begin{equation}\label{td}\left[\begin{array}{ll} \tilde d_1(k)\\ \tilde d_2(k) \end{array}\right]=\left[\begin{array}{cc} 0 & \kappa\\ -\kappa & 0 \end{array}\right] \left[\begin{array}{ll} \tilde x_1(k)\\ \tilde x_2(k)\end{array}\right] +  \left[\begin{array}{ll} w_1(k)\\w_2(k)\end{array}\right].\end{equation}
Now, the question is: which expression should we use to represent 
the set  of admissible state vectors $X^k$ appearing in equation (\ref{ed})? This question has much to do with another one: 
how does agent 1 predict $\tilde x_2$ and the same for agent 2 with respect to state $\tilde x_1$?
A possible answer is shown next:
\begin{equation}\label{est}\left[\begin{array}{ll} \tilde x_1(k+1)\\ \tilde x_2(k+1)\end{array}\right]=\left[\begin{array}{ll} \tilde x_1(k)\\ \tilde x_2(k)\end{array}\right] +\left[\begin{array}{ll} 0 \\ \kappa \bar x_1\end{array}\right]  - \left[\begin{array}{ll} 0 \\ w_2(k) \end{array}\right]+ \left[\begin{array}{ll} 0 \\ C \end{array}\right] ,\quad \left[\begin{array}{ll} \tilde x_1(0)\\ \tilde x_2(0)\end{array}\right]= \left[\begin{array}{ll} x_1(0)\\ \tilde x_2(0)\end{array}\right].\end{equation}
Let us elaborate more on the above equations. Regarding to variable $\tilde x_2(k)$, this is used in the evolution of
$\tilde d_1(k)$ as in the first equation of (\ref{td}). Because of the positive contribution of the term $\kappa \tilde x_2(k)$ on $\tilde d_1(k)$,
a conservative approach would suggest to take for $\tilde x_2(k)$ a possible upper bound of $x_2(k)$ and this is exactly the spirit behind 
the evolution of $\tilde x_2(k)$ as expressed in the second equation of (\ref{est}). Here, $\bar x_1$ is an average value for $ x_1$. A similar reasoning applies to $\tilde x_1(k)$, used in the evolution of $\tilde d_2(k)$ as in the second equation of (\ref{td}). We now observe a negative contribution of the term $-\kappa \tilde x_1(k)$ on $\tilde d_2(k)$ and therefore take for $\tilde x_1(k)$ a possible lower bound of $x_1(k)$ as shown in the first equation of (\ref{est}).

We can now move to show and comment our simulated results. We have carried out two different set of experiments whose parameters are displayed in 
Table \ref{t:data1}. In the line of the weakly coupling assumption (see Assumption \ref{asm:wc}), 
we have set $\kappa$ small enough and in the range equal from $0.01$ to $0.225$. Such a range works good as we will see that $|\kappa x_i|$ is always less than $w_i$, which also means $\Delta x(k) + E w(k) < 0$. For sake of simplicity and without loss of generality, capacity $C$
is set to three, disturbances $w_i$ are unitary and $\bar x_1$ is equal to one. Unitary disturbances facilitate the check out and interpretation
of the results as when the accumulated demand over the horizon turns to be very close to the horizon length.  
The two experiments differ also in the horizon length $N$
for the reasons clarified next. 

The first set of experiments aims at analysing the computational benefits of decomposition and relaxation upon which our solution method is based. 
So, we consider horizon lenghts $N$ from one to ten. We do not need to consider larger values of $N$ as even in this small range of values, differences
in the computational times are already evident enough as clearly illustrated in Fig. \ref{fig:Time22}. 
Here, we plot the average computational time vs. the horizon length $N$ of the mixed integer predictive control problem (solid diamonds), of the decomposed problem $(MIPC_i)$ (dashed squares), and of the linear program $(LP_i)$. Average computational time means the average time for one agent to make a single decision (the total time is about $2N$ times the average one). As the reader may notice, the 	computational time of the linear program $(LP_i)$ is a fraction either of the one requested by the $(MPC)$ or of the one required by the $(MIPC_i)$.  
\begin{table}
\begin{center}
   \begin{tabular}{|c|c|c|c|c|c|c|c|c|c|c|c|}
  \hline
   & $N$ & $\kappa$ &  $C$ &  $w_1(k)$ &  $w_2(k)$ & $\bar x_1$   \\\hline
   I & 1 \ldots 10& 0.1   &  3   &   1      &  1   &  1    \\\hline
   II & 6 & $\{0.01, \;0.2, \:0.225\}$   &  3   &   1      &  1           &  1 \\\hline
   
  \end{tabular}\\
  \end{center}
  \caption{Simulation parameters chosen for the two experiments.}\label{t:data1}\end{table}

\begin{center}
(Figure \ref{fig:Time22} about here)
\end{center}

In a second set of simulations, we have inspected how the percentage error $$\epsilon \%=\frac{\text{optimal cost of $(MPC_i)$}- \text{optimal cost of $(MPC)$}}{\text{optimal cost of $(MPC)$}}\%$$ varies with different values of the elastic coefficient $\kappa$. The role of $\kappa$ is crucial as we recall that $\kappa$ describes the effective tightness and coupling between different states $x_1(k)$ and $x_2(k)$. 
We do expect that  small values for coefficient $\kappa$, which means weak coupling of state components, may lead to small errors $\epsilon \%$. Differently, high values of $\kappa$, describing a strong coupling between state components, are supposed to induce higher values of $\epsilon \%$.     

This is in line with what we can observe in Fig. \ref{fig:Fig3} where we plot the error $\epsilon \%$ as function of coefficient $\kappa$. For a relatively small values of $\kappa$ in the range from $0$ to $0.2$, we observe a percentage error not exceeding the one percent, $\epsilon \% \leq 1$. A discountinuity at around $\kappa=0.2$ causes the error $\epsilon \%$ to go from about $1 \%$ to $20 \%$. 

\begin{center}
(Figure \ref{fig:Fig2} about here)\end{center}

We might not be surprised as discountinuity of errors is typical in mixed  integer programs and we try to clarify this in more details in the plot of Fig. \ref{fig:Fig4}. 
Here, for a horizon length $N=6$ and for a relatively high value of $\kappa=0.225$, we display the exact solution (dashed squares) and approximate solution (solid triangles) returned by the mixed integer linear program $(MIPC)$ and by the linear program $(LP_i)$ respectively.  The  solution is in terms of the time plot of states $x_i(k)$, continuous controls $u_i(k)$ and discrete controls $y_i(k)$. Dotted lines represent predicted trajectories in earlier periods of the receding horizon implementation. At a first check, and this is in accordance with what we 
do expect, we note that controls $u_i(k)$ never exceed the capacity and are always associated to unitary control actions $y_i(k)$. Now, with a look at the behaviour of discrete controls $y_1(k)$, it can be observed that the approximate solution
presents four control actions (four peaks  at one), whereas the exact solution has control $y_1(k)$ acting on the system only three times (three peaks at one). One peak out of four represents an increase in the use of control actions of about $25$ percent which reflects into an approximate increase in the percentage error of $20 \%$. A last observation concerning the exact plot of $y_i(k)$ is that the number of control actions are as minimal as possible, i.e., three for $y_1(k)$ and two for $y_2(k)$. This makes sense as the accumulated demand over the horizon approximates by above the horizon length. This implies that the minimum number of control actions can be roughly obtained dividing the accumulated demand (about something above six) by the capacity $C$ (equal to three) and rounding the fractional result up to the next integer. 
\begin{center}
(Figure \ref{fig:Fig3} about here)\end{center}
Let us move to compare exact and approximate solutions for a smaller value of $\kappa=0.2$. 
With reference to Fig. \ref{fig:Fig4}, we observe that, differently from above, discrete controls $y_i(k)$ coincide.
However, we still have notable differences in the plot of continuous controls $u_1(k)$ which cause distinct state trajectories
for $x_1(k)$. Small differences can be noted for $u_2(k)$ and $x_2(k)$ as well. 
The observed differences still cause a reduced percentage error $\epsilon \%=1$.     
\begin{center}
(Figure \ref{fig:Fig4} about here)\end{center}
We conclude our simulations by showing that the percentage error $\epsilon \%$ is around zero when we reduce further the value of $\kappa$ to $0.01$. 
This is evident if we look at Fig. \ref{fig:Fig5}, where plots of different styles overlap which means that exact and approximate solutions coincide. 
\begin{center}
(Figure \ref{fig:Fig5} about here)\end{center}

\newpage

\begin{figure}
\centering
\includegraphics[width=12cm]{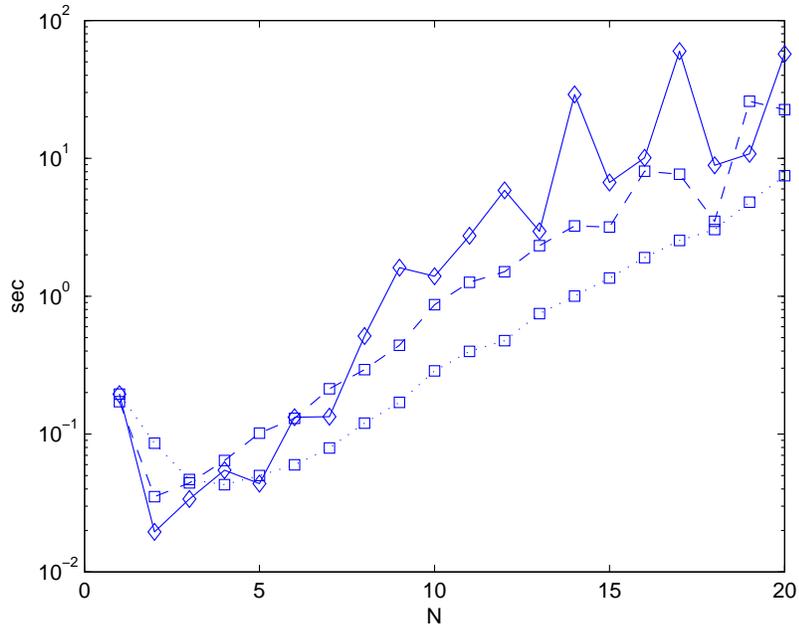}
\caption{Average computational time vs. horizon length $N$ of the mixed integer predictive control problem (solid diamonds), of the
decomposed problem $(MIPC_i)$ (dashed squares), and of the linear program $(LP_i)$.}
\label{fig:Time22}
\end{figure}

\begin{figure}
\centering
\includegraphics[width=12cm]{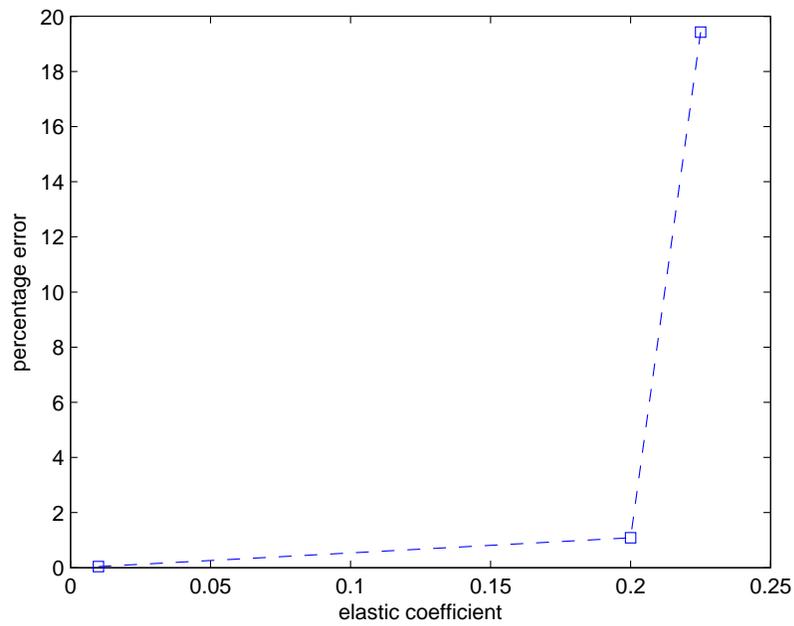}
\caption{Percentage error $\epsilon \%$ for different values of the elastic coefficient $k$.}
\label{fig:Fig2}
\end{figure}

\begin{figure}
\centering
\includegraphics[width=12cm]{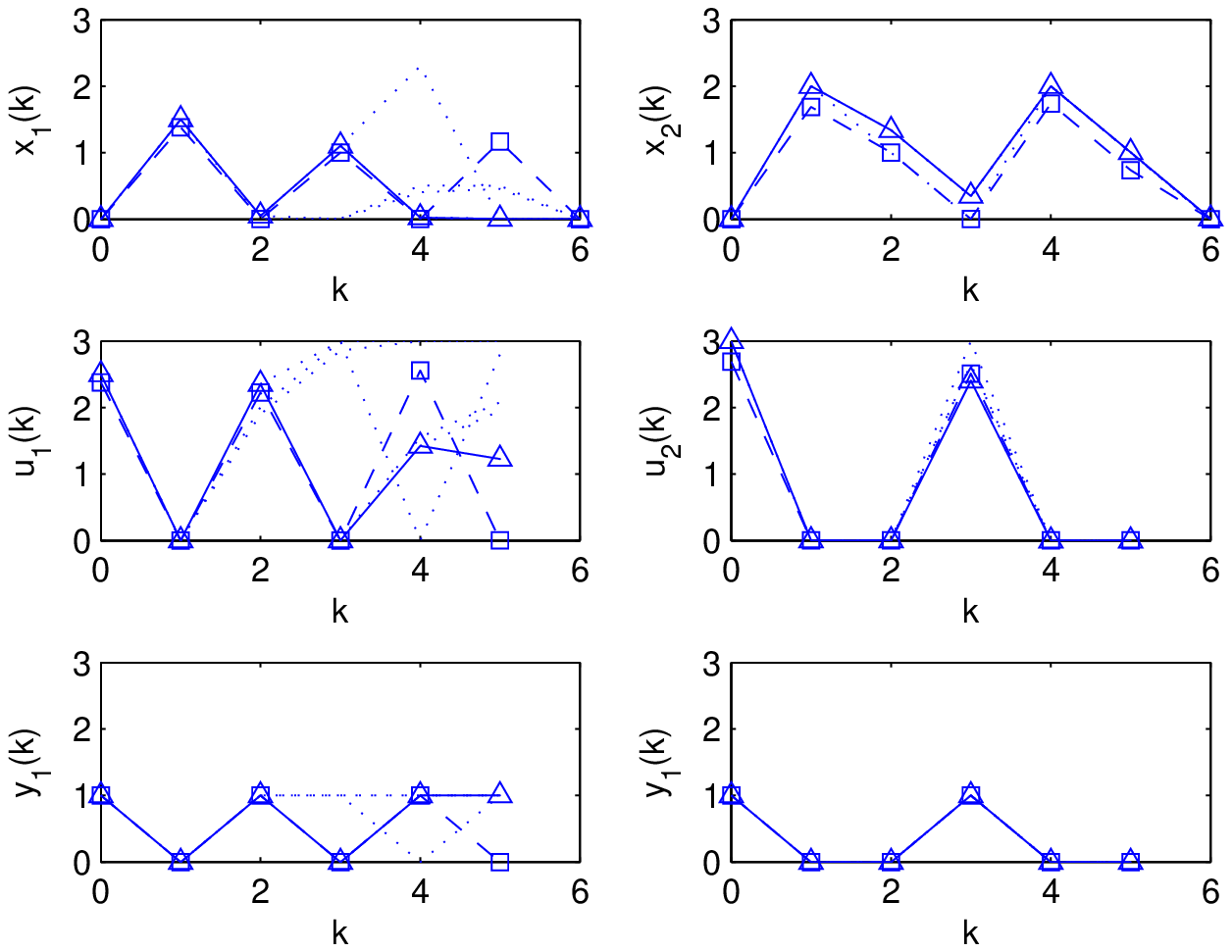}
\caption{Elastic coefficient $\kappa=0.225$. Exact solution (dashed squares) and approximate solution (solid triangles) returned by the mixed integer linear program $(MIPC)$ and by the linear program $(LP_i)$ respectively. Horizon length $N=6$. Time plot of states $x_i(k)$, continuous controls $u_i(k)$ and discrete controls $y_i(k)$.}
\label{fig:Fig3}
\end{figure}

\begin{figure}
\centering
\includegraphics[width=12cm]{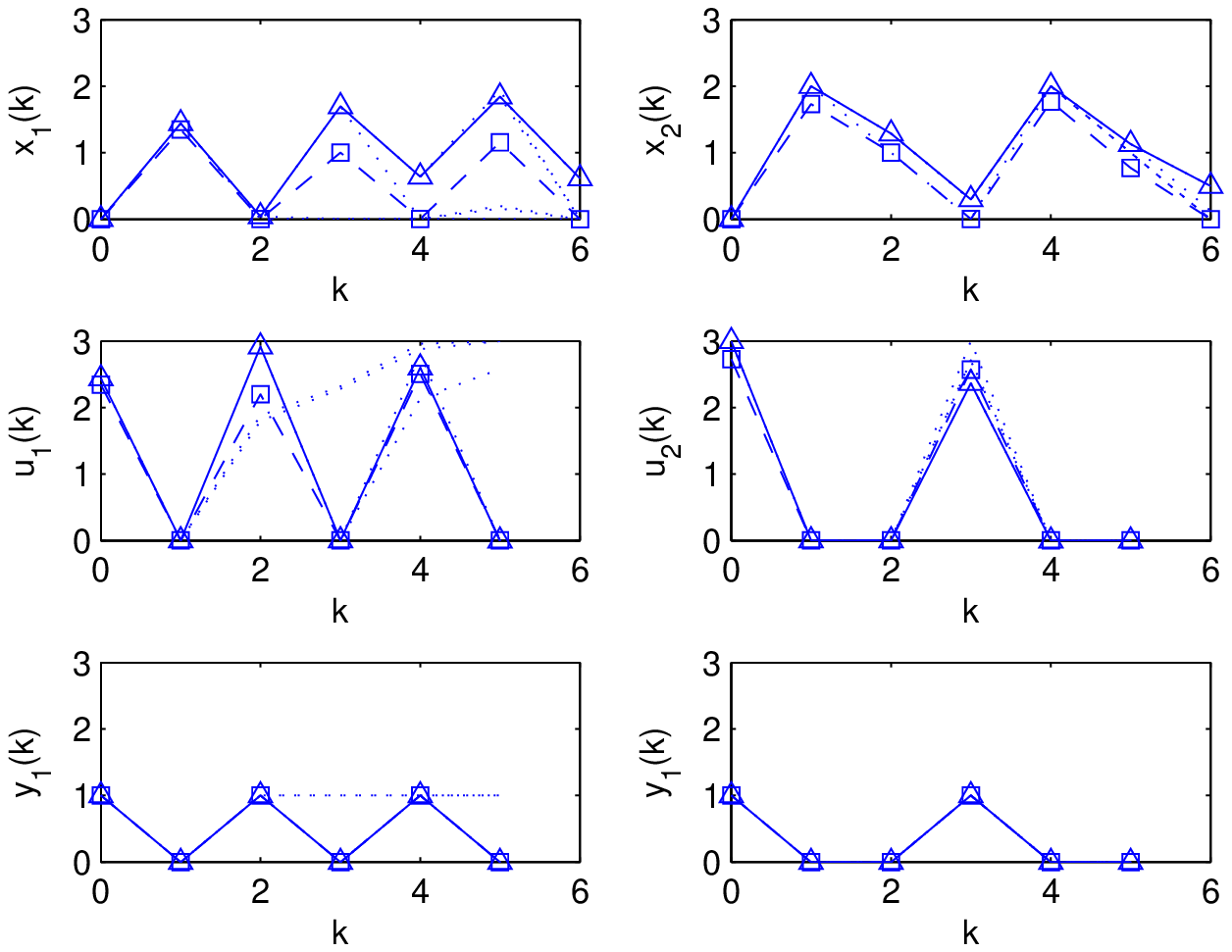}
\caption{Elastic coefficient $\kappa=0.20$. Exact solution (dashed squares) and approximate solution (solid triangles) returned by the mixed integer linear program $(MIPC)$ and by the linear program $(LP_i)$ respectively. Horizon length $N=6$. Time plot of states $x_i(k)$, continuous controls $u_i(k)$ and discrete controls $y_i(k)$.}
\label{fig:Fig4}
\end{figure}

\begin{figure}
\centering
\includegraphics[width=12cm]{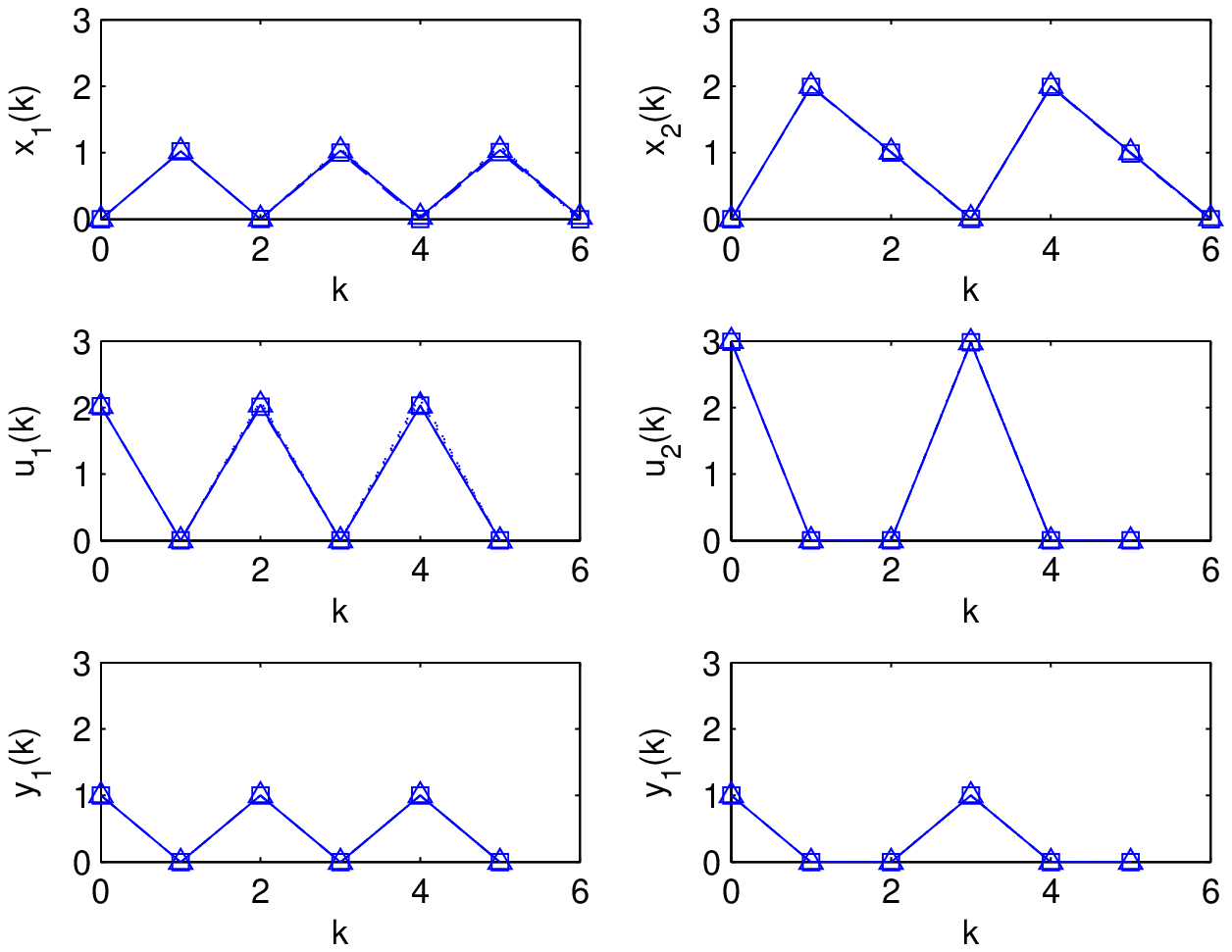}
\caption{Elastic coefficient $\kappa=0.001$. Exact solution (dashed squares) and approximate solution (solid triangles) returned by the mixed integer linear program $(MIPC)$ and by the linear program $(LP_i)$ respectively. Horizon length $N=6$. Time plot of states $x_i(k)$, continuous controls $u_i(k)$ and discrete controls $y_i(k)$.}
\label{fig:Fig5}
\end{figure}

\end{document}